\newtheorem{th}{Theorem}[section]
\newtheorem{defin}[th]{\bf Definition}
\newtheorem{prop}[th]{\bf Proposition}
\newtheorem{cor}[th]{\bf Corollary}
\newtheorem{lemma}[th]{\bf Lemma}
\newtheorem{ex}[th]{\bf Example}
\newenvironment{proof}{{\bf Proof.}}{\hfill $\Box$\par\vskip3mm}
\title{\bf Characterisation of PF rings by the Finite Topology on duals of $R$ Modules}
\author{\sc Miodrag Cristian Iovanov \small \\  Department of Algebra, Faculty of Mathematics, University of Bucharest \\ Academiei 14, Bucharest, Romania}
\date{}
\begin{document}
\baselineskip16pt
\maketitle
\begin{quotation}\noindent{\small{\sc Abstract}. In this paper we study the properties of the finite topology on the dual of a module over an arbitrary ring. We aim to give conditions when certain properties of the field case are can be still found here. Investigating the correspondence between the closed submodules of the dual $M^{*}$ of a module $M$ and the submodules of $M$, we prove some characterisations of PF rings: the up stated correspondence is an anti isomorphism of lattices iff $R$ is a PF ring.}\end{quotation}\bigskip\bigskip

\begin{section}{Introduction and preliminaries}
Let $R$ be an arbitrary (non commutative) ring. We will use the notations ${\rm Hom}_{R}(M,N)$ for the set of $R$ module morphisms from $M$ to $N$ for right modules $M,N$ and ${}_{R}{\rm Hom}(M,N)$ respectively for left modules $M,N$. Also we use $M^{*}={\rm Hom}_{R}(M,R)$ for any right module $M$ and ${}^{*}M={}_{R}{\rm Hom}(M,R)$ for a left module $M$.\\
 Given two right $R$ modules $M$ and $N$, recall that the finite topology on ${\rm Hom}_{R}(M,N)$ is the linear topology for which a basis of open neighborhoods for $0$ is given by the sets $\{f\in {\rm Hom}_{R}(M,N)\mid f(x_{i})=0,\,\forall\,i\in \{1,\dots,n\}\}$, for all finite sets $\{x_{1},\dots,x_{n}\}\subseteq M$. This is actually the topology induced on ${\rm Hom}_{R}(M,N)$ from ${\rm Hom}_{Set}(M,N)=N^{M}$ which is a product of topological spaces, where $N$ is the topological discrete space on the set $N$. For an arbitrary set $X\subseteq M$ we denote by $X^{\perp}=\{f\in {\rm Hom}_{R}(M,N)\mid f\vert_{X}=0\}$. Denoting by $<X>_{R}$ the $R$ submodule generated by $X$, we obviously have $(<X>_{R})^{\perp}=X^{\perp}$, so we will work with finitely generated submodules $F\leq M$ and the basis of open neighborhoods $\{F^{\perp}\mid F\leq M\,{\rm finitely\,generated}\}$. Also for left $R$ modules $X$ and $Y$ and $U\leq X$ a submodule of $X$ we will denote $U^{\perp}_{{}_{R}{\rm Hom}(M,N)}$ or simply $U^{\perp}=\{g\in {}_{R}{\rm Hom}(X,Y)\mid g\vert_{X}=0\}$ when there is no danger of confusion. If $W\leq {\rm Hom}_{R}(M,N)$ is a subgroup with $M$ and $N$ left $R$ modules we denote $W^{\perp}=\{x\in N\mid f(x)=0,\,\forall\,f\in W\}$. If $N$ is an $R$ bimodule then we consider the left $R$ module structure on ${\rm Hom}_{R}(M,N)$ given by $(r\cdot f)(x)=rf(x)$, for all $x\in M,\,f\in{\rm Hom}_{R}(M,N),\,r\in R$. If $W$ is a (left) submodule in ${\rm Hom}_{R}(M,N)$, then $W^{\perp}$ is a (right) submodule of M.\\
For any right module $M$ we denote by $\Phi_{M}$ the right $R$ modules morphism $$M\stackrel{\Phi_{M}}{\longrightarrow}{}^{*}(M^{*})$$ 
defined by $\Phi_{M}(m)(f)=f(m)$, for all $f\in M^{*}$ and all $m\in M$. Then $\Phi$ is a functorial morphism from $id_{{\cal M}_{R}}$ to the functor ${}^{*}((-)^{*})$.\\
Over a field, there is a series of properties involving the orthogonal $F^{\perp}$ for a vector space $V$ and its dual $V^{*}$ which we will state in a more general setting.

\begin{prop}\label{1}
Let $M,N$ be $R$ modules.\\
(i) If $X\subseteq Y$ are submodules of $M$ then $Y^{\perp}\leq X^{\perp}$. \\
(ii) If $U\subseteq V$ are subgroups of ${\rm Hom}_{R}(M,N)$ then $V^{\perp}\leq U^{\perp}$.
\end{prop}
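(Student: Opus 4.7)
The plan is to unwind the definition of the orthogonal operator in each of the two settings. Both statements express the same intuition, namely that imposing a vanishing condition on a larger set is a stronger requirement than imposing it on a smaller one, so membership in the orthogonal of the larger set automatically entails membership in the orthogonal of the smaller one.

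For part (i), I would take an arbitrary $f \in Y^{\perp}$, so by definition $f(y)=0$ for every $y\in Y$. Since $X\subseteq Y$, this identity restricts to give $f(x)=0$ for every $x\in X$, which is precisely the statement that $f\in X^{\perp}$. This shows $Y^{\perp}\subseteq X^{\perp}$, and as both are subgroups (in fact submodules under the appropriate bimodule hypotheses introduced earlier) of $\mathrm{Hom}_R(M,N)$, the inclusion is an inclusion of subgroups.

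For part (ii), I would proceed symmetrically: pick $x \in V^{\perp}$, so that $f(x)=0$ for every $f\in V$. Since $U\subseteq V$, restricting the quantifier yields $f(x)=0$ for every $f\in U$, i.e.\ $x\in U^{\perp}$. Thus $V^{\perp}\subseteq U^{\perp}$, and again this is an inclusion of subgroups (or submodules of $N$, respectively of $M$, depending on the side conventions already fixed).

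There is no real obstacle here; the only subtlety is notational, namely making sure the two distinct uses of $(-)^{\perp}$ introduced in the preliminaries are not confused. In both cases the argument is a one-line unpacking of definitions, and no structural hypothesis on $R$, $M$, or $N$ beyond what is already assumed is needed.
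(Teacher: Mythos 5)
Your argument is correct and is exactly the standard one-line unwinding of definitions; the paper itself states this proposition without any proof, treating it as immediate, so your proposal supplies precisely what the paper tacitly regards as obvious. There is nothing to add or correct.
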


\begin{lemma}\label{2}
For $M,N$ right $R$ modules we have:\\
(i) If $X\leq M$ is a submodule of $M$ then $(X^{\perp})^{\perp}\supseteq X$ and if we denote $\overline{0}$ the class of $0$ in $M/X$ then we have $(\{\overline{0}\}^{\perp})^{\perp}=(X^{\perp})^{\perp}/X$. If $N$ is an injective cogenerator of ${\cal M}_{R}$ then the equality $(X^{\perp})^{\perp}=X$ holds.\\
(ii) If $Y\leq {\rm Hom}_{R}(M,N)$ is a (left) submodule of ${\rm Hom}_{R}(M,N)$ then $(Y^{\perp})^{\perp}\supseteq \overline{Y}$ ($\overline{Y}$ is the closure of $Y$ in ${\rm Hom}_{R}(M,N)$). If $N=R$ and $R$ is a left PF ring (${}_{R}R$ is injective and a cogenerator of ${}_{R}{\cal M}$) then the equality $(Y^{\perp})^{\perp}=\overline{Y}$ holds for all modules $M$ and (left) submodules $Y\leq M^{*}$.
\end{lemma}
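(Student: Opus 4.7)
The plan is to handle (i) and (ii) in parallel: each has a soft inclusion that holds in general, and a harder equality that requires a cogenerator hypothesis.

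For (i), the inclusion $X\subseteq(X^{\perp})^{\perp}$ is immediate. For the quotient formula I would use the canonical identification $X^{\perp}\cong{\rm Hom}_{R}(M/X,N)$ coming from the universal property of the projection $\pi:M\to M/X$; under this identification $\{\overline{0}\}^{\perp}$ becomes all of ${\rm Hom}_{R}(M/X,N)$, and unwinding definitions gives $\overline{m}\in(\{\overline{0}\}^{\perp})^{\perp}$ iff $g(m)=0$ for every $g\in X^{\perp}$, that is, iff $m\in(X^{\perp})^{\perp}$. When $N$ is an injective cogenerator, proving $(X^{\perp})^{\perp}=X$ reduces via this formula to showing $(\{\overline{0}\}^{\perp})^{\perp}=0$ in $M/X$, which is the standard fact that a cogenerator separates the points of every module.

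For (ii), $Y\subseteq(Y^{\perp})^{\perp}$ is again immediate, and the topological half follows from the remark that $(Y^{\perp})^{\perp}=\bigcap_{x\in Y^{\perp}}\{x\}^{\perp}$ is an intersection of basic open (hence closed) subgroups in the finite topology; so $(Y^{\perp})^{\perp}$ is closed and must contain $\overline{Y}$.

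The substantive step is the reverse inclusion when $R$ is left PF. I would fix $f\in(Y^{\perp})^{\perp}$ and a finitely generated $F=x_{1}R+\cdots+x_{n}R\leq M$, and aim to produce $g\in Y$ with $g(x_{i})=f(x_{i})$ for every $i$, which would give $f-g\in F^{\perp}$ and hence $f\in\overline{Y}$. Introduce the left $R$-linear evaluation ${\rm ev}_{F}:M^{*}\to R^{n}$, $g\mapsto(g(x_{1}),\dots,g(x_{n}))$, and let $A={\rm ev}_{F}(Y)\leq R^{n}$. If $v:=(f(x_{1}),\dots,f(x_{n}))\notin A$, then $\overline{v}\neq 0$ in $R^{n}/A$, and since ${}_{R}R$ is a cogenerator there exists a left $R$-linear $\psi:R^{n}/A\to R$ with $\psi(\overline{v})\neq 0$. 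Composing with the canonical projection, the resulting map $R^{n}\to R$ is necessarily of the form $(r_{1},\dots,r_{n})\mapsto\sum_{i}r_{i}a_{i}$ for some $a_{1},\dots,a_{n}\in R$. Vanishing on $A$ reads $g\bigl(\sum_{i}x_{i}a_{i}\bigr)=\sum_{i}g(x_{i})a_{i}=0$ for every $g\in Y$, so $\sum_{i}x_{i}a_{i}\in Y^{\perp}$; but then $f\bigl(\sum_{i}x_{i}a_{i}\bigr)=\sum_{i}f(x_{i})a_{i}\neq 0$ contradicts $f\in(Y^{\perp})^{\perp}$. The main obstacle is the left/right bookkeeping, ensuring that ${\rm ev}_{F}$ is indeed left $R$-linear and that ${\rm Hom}_{R}(R^{n},R)$ is correctly identified so that the composite map $R^{n}\to R$ takes the stated form; once that is set up, only the cogenerator half of the PF hypothesis is actually used.
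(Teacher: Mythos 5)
Your handling of (i), and of the soft inclusion $\overline{Y}\subseteq(Y^{\perp})^{\perp}$ in (ii), is essentially the paper's: the paper checks closedness of $(Y^{\perp})^{\perp}$ by a direct $\varepsilon$-free argument, you phrase it as an intersection of open (hence closed) subgroups, but these are the same observation. The hard direction of (ii) is where you take a genuinely different route, and it works. The paper first proves that ${}_{R}R$ injective forces $\Phi_{F}:F\to{}^{*}(F^{*})$ to be surjective for every finitely generated $F$ (via a naturality square over a free presentation $R^{n}\twoheadrightarrow F$), then produces a cogenerator functional $\phi:F^{*}\to R$ separating $f\vert_{F}$ from the restrictions of a generating set of $Y$, and finally uses the surjectivity of $\Phi_{F}$ to realize $\phi$ as evaluation at some $x\in F$. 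You sidestep $\Phi_{F}$ entirely: pushing the data into $R^{n}$ via ${\rm ev}_{F}$ and separating $v$ from $A={\rm ev}_{F}(Y)$ there, the separating left-linear map $R^{n}\to R$ is automatically of the form $(r_{i})\mapsto\sum_{i}r_{i}a_{i}$, so the required witness $\sum_{i}x_{i}a_{i}\in F\cap Y^{\perp}$ falls out of the free-module structure with no appeal to injectivity of ${}_{R}R$. This replaces the functoriality diagram chase with a two-line computation and, as you observe, uses only the cogenerator half of the PF hypothesis, so it in fact yields a slightly sharper statement of the lemma. The paper's longer route is not wasted elsewhere, though: the surjectivity of $\Phi_{F}$ for finitely generated $F$ established inside that proof is reused later (in the corollary that $\Phi_{F}:F\to{}^{*}(F^{*})$ is an isomorphism over a PF ring), a fact your streamlined argument would leave still to be proved.
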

\begin{proof}
(i) If $x\in X$ then take $f\in X^{\perp}$; then $f(x)=0$ as $f\vert_{X}=0$. We get that $f(x)=0,\,\forall\, f\in X^{\perp}$ so $x\in (X^{\perp})^{\perp}$. Moreover, $\overline{x}\in (\{\overline{0}\}^{\perp})^{\perp}$ if and only if $\tilde{h}(\overline{x})=0,\,\forall\, \tilde{h}:M/X\longrightarrow N$, equivalent to $h(x)=0,\,\forall h\in X^{\perp}$, i.e. $x\in (X^{\perp})^{\perp}$.\\
Suppose now $N$ is an injective cogenerator of ${\cal M}_{R}$ and take $x\in (X^{\perp})^{\perp}$. If $x\notin X$ then there is $f:M/X\longrightarrow N$ such that $f(\hat{x})\neq 0$ ($\hat{x}$ is the image of $x$ in $M/X$ via the canonic morphism $\pi: M\longrightarrow M/X$). Then there is $g=f\circ \pi,\, g\in {\rm Hom}_{R}(M,N)$ such that $g\vert_{X}=0$ ($g\in X^{\perp}$) and $g(x)\neq 0$, showing that $x\notin (X^{\perp})^{\perp}$, a contradiction.\\
(ii) Let $f\in \overline{Y}$ and take $x\in Y^{\perp}$. Then there is $g\in Y$ such that $f(x)=g(x)$. But $g(x)=0$ because $x\in Y^{\perp}$ so $f(x)=0$. Thus $f\vert_{Y^{\perp}}=0$ and $f\in (Y^{\perp})^{\perp}$. \\
For the converse, first we see that ${}_{R}R$ injective implies that for all finitely generated right $R$ modules $F$ we have that $F\stackrel{\Phi_{F}}{\longrightarrow}{{}^{*}(F^{*})}$ 
is an epimorphism. Take $\pi:P=R^{n}\longrightarrow F$ an epimorphism in ${\cal M}_{R}$. Then we have a monomorphism $0\longrightarrow P^{*}\longrightarrow F^{*}$ in ${}_{R}{\cal M}$, and as ${}_{R}R$ is injective we obtain an epimorphism of right modules ${}^{*}(P^{*})\stackrel{{}^{*}(p^{*})}{\longrightarrow}{}^{*}(F^{*})\longrightarrow 0$. Because $\Phi$ is a functorial morphism then we have the commutative diagram
\vspace{.5cm}
$$
\xymatrix{
P \ar[r]^\pi\ar[d]_{\Phi_P} & F \ar[r]\ar[d]^{\Phi_F} & 0\\
{}^{*}(P^{*})\ar[r]_{{}^{*}(\pi^{*})} & {}^{*}(F^{*}) \ar[r] & 0
}
$$
\vspace{.5cm}
showing that $\Phi_{F}$ is surjective, as $\Phi_{P}=\Phi_{R^{n}}$ is an isomorphism. Now to prove the desired equality, take $f\in (Y^{\perp})^{\perp}$, $(f_{i})_{i\in I}$ a family of generators of the left $R$ module $Y$, and $F<M$ a finitely generated submodule of $M$. Then $f_{i}\vert M\in F^{*}$ and if $f\vert F\notin {}_{R}<f_{i}\vert_{F}\mid i\in I>$ then as ${}_{R}R$ is an injective cogenerator of ${}_{R}{\cal M}$ we can find a morphism of left $R$ modules $\phi: F^{*}\longrightarrow R$ such that $\phi(f_{i})=0,\,\forall i\in I$ and $\phi(f)\neq 0$. But as $\Phi_{F}$ is surjective, we can then find $x\in F$ such that $\phi=\Phi(x)$ and then $f_{i}(x)\Phi(x)(f_{i})=\phi(f_{i})=0,\,\forall i\in I$, showing that $x\in Y^{\perp}$ and $f(x)=\Phi(x)(f)=\phi(f)\neq 0$ which contradicts the fact that $f$ belongs to $(Y^{\perp})^{\perp}$. Thus we must have $f\vert_{F}\in {}_{R}<f_{i}\vert_{F}\mid i\in I>$ so there is $(r_{i})_{i\in I}$ a family of finite support such that $f\vert_{F}=\sum\limits_{i\in I}r_{i}(f_{i}\vert_{F})=(\sum\limits_{i\in I}r_{i}f_{i})\vert_{F}$. This last relation shows that $f\in \overline{Y}$.
\end{proof}

\begin{cor}
If $R$ is a PF ring (left and right) then for any right (or left) $R$ module $M$ and $Y<M^{*}$ we have that $Y$ is dense in $M^{*}$ if and only if $Y^{\perp}=0$.
\end{cor}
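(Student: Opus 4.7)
The plan is to reduce the topological condition ``density'' to an algebraic identity via Lemma~\ref{2}(ii), and then read off the equivalence from the injective-cogenerator behaviour of $R$ on both sides. I will describe the case of a right module $M$; the left-module case is obtained by swapping the roles of left and right PF throughout and applying the symmetric version of Lemma~\ref{2}.

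Since $R$ is left PF, Lemma~\ref{2}(ii) gives $(Y^\perp)^\perp = \overline{Y}$, so $Y$ is dense in $M^*$ if and only if $(Y^\perp)^\perp = M^*$. The corollary thus reduces to the equivalence
\[
(Y^\perp)^\perp = M^* \iff Y^\perp = 0.
\]
The implication $\Leftarrow$ is immediate, since the orthogonal of the zero submodule is all of $M^*$.

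For $\Rightarrow$, the plan is to apply Lemma~\ref{2}(i) with $X := Y^\perp \le M$ and $N = R$: the right PF hypothesis supplies precisely that $R_R$ is an injective cogenerator of $\mathcal{M}_R$, yielding $((Y^\perp)^\perp)^\perp = Y^\perp$. Taking $(-)^\perp$ of the assumed equality $(Y^\perp)^\perp = M^*$ then produces $Y^\perp = (M^*)^\perp$, and $(M^*)^\perp = 0$ because the cogenerator property of $R_R$ separates the points of $M$. Equivalently, one may argue directly: a nonzero $x \in Y^\perp$ would be separated from $0$ by some $f \in M^*$, but $f \in M^* = (Y^\perp)^\perp$ forces $f(x) = 0$, contradiction.

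There is no real obstacle, as all the substantive content is already packaged inside Lemma~\ref{2}. The only point worth watching is the side bookkeeping: the closure formula of Lemma~\ref{2}(ii) consumes the \emph{left} PF hypothesis, while the cogenerator step in the $\Rightarrow$ direction consumes the \emph{right} PF hypothesis; the roles reverse when $M$ is taken to be a left module, which explains why the statement requires the full two-sided PF assumption.
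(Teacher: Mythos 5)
Your proposal is correct and is essentially the intended argument: the corollary is stated immediately after Lemma~\ref{2} with no separate proof, precisely because it is the straightforward consequence you give. You also correctly flag the one subtle point, namely that for a right module $M$ the closure identity $(Y^\perp)^\perp=\overline{Y}$ draws on the \emph{left} PF hypothesis (Lemma~\ref{2}(ii)), while the separation $(M^*)^\perp=0$ draws on the \emph{right} PF hypothesis (Lemma~\ref{2}(i) with $X=0$, or equivalently Proposition~\ref{f.4}(i)), which is why the statement needs both sides.
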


\begin{prop}
Let $M$ be a right $R$ module. \\
(i) If $X\leq M$ then we have $((X^{\perp})^{\perp})^{\perp}=X^{\perp}$ and $X^{\perp}$ is closed.\\
(ii) If $Y\leq {\rm Hom}_{R}(M,N)$ then $((Y^{\perp})^{\perp})^{\perp}=Y^{\perp}$.
\end{prop}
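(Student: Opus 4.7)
The plan is to deduce both triple-perp identities from the antimonotonicity of Proposition 1.1 together with the ``easy'' halves of Lemma 1.2, namely $X\subseteq (X^{\perp})^{\perp}$ and $Y\subseteq (Y^{\perp})^{\perp}$; note that these weak inclusions hold without any PF hypothesis, so the whole proposition is topology-only and does not require $N=R$. For part (i), I would start from $X\subseteq (X^{\perp})^{\perp}$ (Lemma 1.2(i)) and apply the perp operation once more; Proposition 1.1(i) reverses the inclusion and yields $((X^{\perp})^{\perp})^{\perp}\subseteq X^{\perp}$. For the reverse inclusion I would apply Lemma 1.2(ii) to the subgroup $X^{\perp}\leq {\rm Hom}_R(M,N)$, obtaining $((X^{\perp})^{\perp})^{\perp}\supseteq \overline{X^{\perp}}\supseteq X^{\perp}$; combining the two gives the desired equality.

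For the closedness of $X^{\perp}$ I would argue directly from the topology rather than from the identity just proved (extracting closedness from that identity would be circular). Write $X^{\perp}=\bigcap_{x\in X}\langle x\rangle^{\perp}$. Each $\langle x\rangle^{\perp}$ belongs to the declared basis of open neighbourhoods of $0$ in the finite topology, and any open subgroup of a topological group is automatically closed; hence $X^{\perp}$ is an intersection of closed subgroups, and therefore closed.

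Part (ii) is the exact mirror of the argument for (i): combine $Y\subseteq (Y^{\perp})^{\perp}$ from Lemma 1.2(ii) with the antimonotonicity of Proposition 1.1(ii) to obtain $((Y^{\perp})^{\perp})^{\perp}\subseteq Y^{\perp}$, and apply Lemma 1.2(i) to the submodule $Y^{\perp}\leq M$ to get the reverse inclusion $Y^{\perp}\subseteq ((Y^{\perp})^{\perp})^{\perp}$. I do not expect a genuine obstacle: the statement is the routine output of the Galois connection set up in Section~1, and the only point requiring a moment of care is that the closedness in (i) must be obtained from the open-subgroup observation above rather than from any triple-perp identity.
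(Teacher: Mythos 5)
Your proposal is correct and follows essentially the same route as the paper: both inclusions in the triple-perp identities come from the antimonotonicity of Proposition 1.1 and the weak inclusions $X\subseteq(X^{\perp})^{\perp}$, $Y\subseteq(Y^{\perp})^{\perp}$ of Lemma 1.2, applied once to $X$ (resp.\ $Y$) and once to $X^{\perp}$ (resp.\ $Y^{\perp}$); the paper just writes out the second application as an element chase instead of citing the lemma. The one genuine variation is your closedness argument: you write $X^{\perp}=\bigcap_{x\in X}\langle x\rangle^{\perp}$ and invoke the general topological-group fact that an open subgroup is closed, whereas the paper argues directly from the definition of closure (given $f\in\overline{X^{\perp}}$ and $x\in X$, the neighbourhood $f+\{x\}^{\perp}$ meets $X^{\perp}$, forcing $f(x)=0$). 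Your version is slightly slicker and makes the ``purely topological, no PF hypothesis'' point more visible, while the paper's is more elementary and self-contained; both are fine.
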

\begin{proof}
"$\subseteq$" from (i) and (ii) follow from Proposition \ref{1} and Lemma \ref{2}.\\
(i) "$\supseteq$" Let $f\in X^{\perp}$. Take $x\in (X^{\perp})^{\perp}$; then $f(x)=0$ so $f\in ((X^{\perp})^{\perp})^{\perp}$. To show that $X^{\perp}$ is closed take $f\in \overline{X^{\perp}}$ and $x\in X$. Then there is $g\in X^{\perp}$ such that $g(x)=f(x)$ so $f(x)=0$ ($x\in X$). We obtain that $f\vert_{X}=0$ so $f\in X^{\perp}$.\\
(ii) "$\supseteq$" Let $x\in Y^{\perp}$. If $f\in (Y^{\perp})^{\perp}$ then $f\vert_{Y^{\perp}}=0$ so $f(x)=0$ showing that $x\in ((Y^{\perp})^{\perp})^{\perp}$.
\end{proof}

\begin{prop}\label{3}
Let $M,N$ be right $R$ modules and $(X_{i})_{i\in I}$ a family of submodules of $M$. Then \\
(i) $(\sum\limits_{i\in I}X_{i})^{\perp}=\bigcap\limits_{i\in I}X_{i}^{\perp}$. \\
(ii) $(\bigcap\limits_{i\in I}X_{i})^{\perp}\supseteq \sum\limits_{i\in I}X_{i}^{\perp}$. If $I$ is finite and $N$ is injective then equality holds.
\end{prop}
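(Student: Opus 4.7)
For part (i), the plan is a direct calculation from the definitions. A morphism $f \in \mathrm{Hom}_R(M,N)$ vanishes on $\sum_{i\in I} X_i$ if and only if it vanishes on each generator of this sum, i.e.\ on every element of every $X_i$; this gives the equality $(\sum_i X_i)^\perp = \bigcap_i X_i^\perp$ immediately.

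For the inclusion $\supseteq$ in part (ii), I would observe that $\bigcap_i X_i \subseteq X_j$ for every $j$, so by Proposition~\ref{1}(i) we have $X_j^\perp \subseteq (\bigcap_i X_i)^\perp$ for each $j$, and summing gives $\sum_j X_j^\perp \subseteq (\bigcap_i X_i)^\perp$.

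The real content is the equality in the finite injective case. First I would reduce to $|I|=2$ by induction: setting $Y = \bigcap_{i=1}^{n-1} X_i$, writing $(\bigcap_{i=1}^{n} X_i)^\perp = (Y \cap X_n)^\perp$ and applying the two-module case gives $Y^\perp + X_n^\perp$, then the inductive hypothesis rewrites $Y^\perp$ as $\sum_{i=1}^{n-1} X_i^\perp$. So the core step is $I=\{1,2\}$: given $f \in (X_1 \cap X_2)^\perp$, I need to decompose $f = f_1 + f_2$ with $f_j \in X_j^\perp$. The idea is to exploit the isomorphism $X_1/(X_1\cap X_2) \cong (X_1+X_2)/X_2$: since $f|_{X_1\cap X_2}=0$, the formula $h(x_1+x_2) := f(x_1)$ gives a well-defined $R$-morphism $h : X_1+X_2 \to N$. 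This is the step that forces the injectivity hypothesis on $N$: I extend $h$ to some $\tilde h : M \to N$ using that $N$ is injective, and then set $f_2 = \tilde h$ and $f_1 = f - \tilde h$. A direct check on elements of $X_2$ and $X_1$ respectively yields $f_2 \in X_2^\perp$ and $f_1 \in X_1^\perp$.

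The main obstacle is precisely constructing the splitting in the two-module case; once the well-defined map $h$ on $X_1+X_2$ is written down, injectivity of $N$ does the rest. The finiteness of $I$ enters twice: to make the induction run and, more essentially, because an infinite sum $\sum_i X_i^\perp$ need not be closed, while $(\bigcap_i X_i)^\perp$ always is (Proposition preceding this one), so equality cannot hold in general in the infinite case even over an injective $N$.
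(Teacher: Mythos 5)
Your proof is correct, and in the key step it takes a genuinely different (and somewhat more elementary) route than the paper. Both arguments reduce to two submodules $X_{1},X_{2}$ and both invoke the injectivity of $N$ once, to extend a morphism along a monomorphism, but the extension takes place in different positions. You build a morphism directly on the submodule $X_{1}+X_{2}\le M$ by the formula $h(x_{1}+x_{2})=f(x_{1})$ (well-defined because $f$ kills $X_{1}\cap X_{2}$, $R$-linear by a routine check), extend it along $X_{1}+X_{2}\hookrightarrow M$, and then read off $f_{2}=\tilde h\in X_{2}^{\perp}$, $f_{1}=f-\tilde h\in X_{1}^{\perp}$. The paper instead passes to the quotient: it factors $f$ as $\overline f\circ\pi$ through $M/(X_{1}\cap X_{2})$, embeds this quotient diagonally into $M/X_{1}\oplus M/X_{2}$, extends $\overline f$ along that embedding to a map $h=\overline u\oplus\overline v$, and pulls back $u=\overline u\circ p\in X_{1}^{\perp}$, $v=\overline v\circ q\in X_{2}^{\perp}$. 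The two constructions are dual faces of the same fact (essentially the second isomorphism theorem versus the diagonal embedding into the product of quotients); your version avoids introducing the quotient modules and the direct sum, so it is a bit leaner to state, while the paper's version makes the decomposition $h=\overline u\oplus\overline v$ structurally visible from the start. Your closing remark about why the infinite case fails (that $(\bigcap_i X_i)^\perp$ is always closed while $\sum_i X_i^\perp$ need not be) is a correct heuristic and is consistent with the counterexample the paper gives just below.
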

\begin{proof}
(i) $f\in (\sum\limits_{i\in I}X_{i})^{\perp}\Leftrightarrow f\vert_{\sum\limits_{i\in I}X_{i}}=0\Leftrightarrow f\vert_{X_{i}}=0,\,\forall i\in I\Leftrightarrow f\in X_{i}^{\perp},\,\forall i\in I\Leftrightarrow f\in \bigcap\limits_{i\in I}X_{i}^{\perp}$.\\
(ii) "$\supseteq$" is obvious, for Proposition \ref{1} shows that $X_{i}^{\perp}\subseteq {\bigcap\limits_{j\in I}X_{j}}^{\perp},\,\forall i\in I$. For the converse it is enough to prove the equality for two submodules $X,Y$ of $M$. Denote $\pi:M\longrightarrow M/X\cap Y$, $p:M\longrightarrow M/X$, $q:M\longrightarrow M/Y$ the canonical morphisms. If $f\in {\rm Hom}_{R}(M,N)$ such that $f\vert _{X\cap Y}=0$ then denote $\overline{f}:M/X\cap Y\longrightarrow N$ the factorisation of $f$ ($f=\overline{f}\circ \pi$) and $i:M/X\cap Y\longrightarrow M/X\oplus M/Y$ the injection $i(\pi(x))=(p(x),q(x)),\,\forall x\in M$. Then the diagram 
\vspace{.5cm}
$$
\xymatrix{
0 \ar[r] & \frac{M}{X\cap Y} \ar[r]^{i}\ar[d]_{\overline{f}} & \frac{M}{X}\oplus\frac{M}{Y} \ar[dl]^{h=\overline{u}\oplus\overline{v}}\\
 & N & 
}
$$
\vspace{.5cm}
is completed commutatively by $h$. Then $h=\overline{u}\oplus\overline{v}$, with $\overline{u}\in {\rm Hom}_{R}(M/X,N)$ and ${\rm Hom}_{R}(M/Y,N)$, such that $h(p(x),q(x))=\overline{u}(p(x))+\overline{v}(q(x))$. Taking $u=\overline{u}\circ p$ and $v=\overline{v}\circ q$ we have $u\in X^{\perp},\,v\in Y^{\perp}$ and $f(x)=\overline{f}(\pi(x))=h(i(\pi(x)))=h(p(x),q(x))=\overline{u}(p(x))+\overline{v}(q(x))=u(x)+v(x),\,\forall x\in M$, so $f\in X^{\perp}+Y^{\perp}$.
\end{proof}

\begin{prop}\label{4}
Let $M,N$ be right $R$ modules and $(Y_{i})_{i\in I}$ a family of submodules of ${\rm Hom}_{R}(M,N)$. Then: \\
(i) $(\sum\limits_{i\in I}Y_{i})^{\perp}=\bigcap\limits_{i\in I}Y_{i}^{\perp}$. \\
(ii) $(\bigcap\limits_{i\in I}Y_{i})^{\perp}\supseteq\sum\limits_{i\in I}Y_{i}^{\perp}$. If $N=R$ and $R$ is a PF ring (both left and right PF) and $Y_{i}$ are closed subsets of $M^{*}={\rm Hom}_{R}(M,R)$ then the equality holds: $(\bigcap\limits_{i\in I}Y_{i})^{\perp}=\sum\limits_{i\in I}Y_{i}^{\perp}$.
\end{prop}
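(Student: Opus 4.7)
My plan: part (i) and the $\supseteq$ half of (ii) are immediate analogues of Proposition~\ref{3}. For (i), $x\in(\sum_{i}Y_{i})^{\perp}$ iff every $g\in Y_{i}$ vanishes at $x$ for every $i$, iff $x\in\bigcap_{i}Y_{i}^{\perp}$. For the $\supseteq$ half of (ii), Proposition~\ref{1}(ii) gives $Y_{i}^{\perp}\leq(\bigcap_{j}Y_{j})^{\perp}$ for each $i$, and summing over $i$ finishes.

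For the nontrivial direction under the PF hypothesis my strategy is to compute $(\sum_{i}Y_{i}^{\perp})^{\perp}$ and $((\bigcap_{i}Y_{i})^{\perp})^{\perp}$ separately, observe they coincide, and then apply $\perp$ once more. By Proposition~\ref{3}(i) applied to the family $(Y_{i}^{\perp})_{i}$ of submodules of $M$, $(\sum_{i}Y_{i}^{\perp})^{\perp}=\bigcap_{i}(Y_{i}^{\perp})^{\perp}$; and by Lemma~\ref{2}(ii) (using $R$ left PF), $(Y_{i}^{\perp})^{\perp}=\overline{Y_{i}}=Y_{i}$ because each $Y_{i}$ is closed, so $(\sum_{i}Y_{i}^{\perp})^{\perp}=\bigcap_{i}Y_{i}$. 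Similarly $\bigcap_{i}Y_{i}$ is itself closed as an intersection of closed subsets, so Lemma~\ref{2}(ii) also yields $((\bigcap_{i}Y_{i})^{\perp})^{\perp}=\bigcap_{i}Y_{i}$. Applying $\perp$ to the resulting equality turns the left side into $((\sum_{i}Y_{i}^{\perp})^{\perp})^{\perp}=\sum_{i}Y_{i}^{\perp}$ by Lemma~\ref{2}(i) with $N=R$ (the right PF hypothesis making $R_{R}$ an injective cogenerator of $\mathcal{M}_{R}$), and the right side into $(((\bigcap_{i}Y_{i})^{\perp})^{\perp})^{\perp}=(\bigcap_{i}Y_{i})^{\perp}$ by part~(ii) of the unlabeled proposition preceding Proposition~\ref{3}, which asserts $((Z^{\perp})^{\perp})^{\perp}=Z^{\perp}$.

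The only real obstacle is bookkeeping: the two $\perp$ operations pair submodules of $M$ with submodules of $M^{*}$ in opposite directions, and one must track which ambient module hosts each intermediate object. The PF hypothesis enters in exactly two places and both uses are essential—left PF via Lemma~\ref{2}(ii) to identify $(Y_{i}^{\perp})^{\perp}$ with $Y_{i}$, and right PF via Lemma~\ref{2}(i) to recover $\sum_{i}Y_{i}^{\perp}$ from its double annihilator in $M$.
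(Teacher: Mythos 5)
Your argument is correct and uses exactly the same ingredients as the paper's proof (Lemma~\ref{2}(i) with $N=R$ and $R$ right PF, Proposition~\ref{3}(i), and Lemma~\ref{2}(ii) with $R$ left PF and the $Y_i$ closed), which the paper compresses into the single chain $\sum_i Y_i^\perp = ((\sum_i Y_i^\perp)^\perp)^\perp = (\bigcap_i (Y_i^\perp)^\perp)^\perp = (\bigcap_i Y_i)^\perp$. Your separate computation of $((\bigcap_i Y_i)^\perp)^\perp$ and the appeal to the unlabeled proposition are a harmless detour: once you have $(\sum_i Y_i^\perp)^\perp = \bigcap_i Y_i$, applying $\perp$ to both sides and using Lemma~\ref{2}(i) on the left already finishes.
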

\begin{proof}
(i) Obvious.\\
(ii) "$\supseteq$" similar to (ii)"$\supseteq$" of the previous proposition. For the converse inclusion, take $(Y_{i})_{i\in I}$ a family of submodules of $M^{*}$. Then
\begin{eqnarray*}
\sum\limits_{i\in I}Y_{i}^{\perp} & = & ((\sum\limits_{i\in I}Y_{i}^{\perp})^{\perp})^{\perp} \;\;\;({\rm from\,Lemma\,}\ref{2}:R {\rm \,is\,right\,PF} )\\
 & = & (\bigcap\limits_{i\in I}(Y_{i}^{\perp})^{\perp})^{\perp} \;\;\;({\rm from\,Proposition\,}\ref{3})\\
 & = & (\bigcap\limits_{i\in I}Y_{i})^{\perp}\;\;\;({\rm Lemma\,}\ref{2}\,:Y_{i}{\rm \,are\,closed\,and\,}R{\rm\,is\,left\,PF})
\end{eqnarray*} 
\end{proof}

\begin{ex}
(i) We show that the equality in Proposition \ref{3} does not hold for infinite sets. Let $V$ be an infinite dimensional space with a countable basis indexed by the set of natural numbers: $(e_{n})_{n\in\mathbf{N}}$.  Put $V_{n}=<e_{k}\mid k\geq n>$. Then we can easily see that $\bigcap\limits_{n\in\mathbf{N}} V_{n}=0$ so $(\bigcap\limits_{n\in\mathbf{N}} V_{n}=0)^{\perp}=V^{*}$. Let $f\in V^{*}$ be the function equal to $1$ on all the $e_{n}$-s. Then as $V_{n}^{\perp}<V_{m}^{\perp},\,\forall\, n<m$, we have that $f\in \sum\limits_{n\in\mathbf{N}}V_{n}^{\perp}\Leftrightarrow\exists\,n\in{\mathbf N}\, such\, that \,f\in V_{n}^{\perp}$ which is impossible as $f(e_{n})=0,\,\forall\, n$. We obtain $\bigcap\limits_{n\in\mathbf{N}} V_{n}\supset\sum\limits_{n\in\mathbf{N}}V_{n}^{\perp}$ a strict inclusion.  
\vspace{.5cm}

\noindent
(ii) We show now that the equality in Proposition \ref{4} does not hold for non-closed sets. Let again $V$ be a vector space with a countable basis $B=(e_{n})_{n\in\mathbf{N}}$. Denote by $e_{n}^{*}$ the linear map equal to $1$ on $e_{n}$ and $0$ on the other elements of the basis $B$ and by $f^{*}$ the linear map equal to $1$ on all the $e_{n}$-s. Take $H=<e_{n}^{*}\mid n\in\mathbf{N}>$ and $L=<f^{*},e_{n}^{*}\mid n\in\mathbf{N}^{*}>$. Then we can easily see that $H^{\perp}=0$, $L^{\perp}=0$ and $H\cap L=<e_{n}^{*}\mid n\in \mathbf{N}^{*}>$, so $H^{\perp}+L^{\perp}=0$, but $(H\cap L)^{\perp}=<f^{*},e_{n}^{*}\mid n\in {\mathbf N}>^{\perp}=<e_{0}>$, thus $H^{\perp}+L^{\perp}\neq(H\cap L)^{\perp}$. 
\vspace{.5cm}

\noindent
(iii) Given the same vector space, we give an example of a family of dense subspaces of $V^{*}$ whose intersection is 0. For $p\in {\mathbf N}$ let $H_{p}=<e_{n}^{*}+e_{n+1}^{*}+\dots+e_{n+p}^{*}\mid n\in {\mathbf N}>$. Then a short computation shows that $H_{n}^{\perp}=0$ showing that $H_{n}$ is closed in $V^{*}$. But $\bigcap\limits_{n\in {\mathbf N}}H_{n}=0$, because if $f=\sum\limits_{i=1}^{m}\lambda_{i}e_{i}^{*}\in\bigcap\limits_{n\in {\mathbf N}}H_{n}\subset H_{0}$, then $f\in H_{m+1}$ which shows that if $f\neq 0$, than it can be written as a linear combination of $e_{i}^{*}$ in which at least one of the $e_{i}^{*}$ has $i>m$. This is impossible as the $e_{n}^{*}$-s are independent.
\end{ex}

\end{section}

\newpage

\begin{section}{The Finite Topology vs PF Rings}
If $R$ is a ring then we have $(R^{n})^{*}={\rm Hom}_{R}(R,R)\simeq {}_{R}R^{n}$. So we can identify $R$ submodules of the right dual of $R^{n}$ with left submodules of ${}_{R}R$ and vice versa. For all $x=(x_{1},\dots,x_{n})\in R^{n}$ we denote by $\varphi_{x}:R^{n}\longrightarrow R$ the morphism of right $R$ modules $\varphi_{x}(r_{1},\dots,r_{n})=\sum\limits_{i=1}^{n}x_{i}r_{i}$ and by $\psi_{x}$ the morphism of left modules defined by $\psi_{x}(r_{1},\dots,r_{n})=\sum\limits_{i=1}^{n}r_{i}x_{i},\,\forall \, (r_{1},\dots,r_{n})\in R^{n}$. Also because of the isomorphism $(R^{n})^{*}\simeq {}_{R}R^{n}, \, x\mapsto \varphi_{x}$, we will denote by $I^{\perp}=\{x\in R^{n}\mid \varphi_{x}(r)=0,\,\forall \, r\in I\}$ if $I$ is a right submodule of $R^{n}$ and similarly for left submodules $X$ of $R^{n}$, $X^{\perp}=\{x\in R^{n}\mid \psi_{x}(r)=0,\,\forall \, r\in X\}$. \\
Over a vector space $V$ there is an anti isomorphism of lattices between the lattice of closed subspaces of $V^{*}$ and the subspaces of $V$ given by $X\mapsto X^{\perp},\,\forall\,X\leq V$. We have the obvious
\begin{prop}\label{f.1}
For a right module $M$ the following are equivalent:\\
(i) The applications $M\geq X\mapsto X^{\perp}\leq M^{*}$ and $M^{*}\geq Y\mapsto Y^{\perp}\leq M$ between the lattice of the submodules of $M$ and the lattice of the closed submodules of $M^{*}$ are inverse anti isomorphism of lattices.\\
(ii) $(X^{\perp})^{\perp}=X,\,\forall\,X\leq M$ and $(Y^{\perp})^{\perp}=\overline{Y},\,\forall\,Y\leq M^{*}$.\\
(iii) $(X^{\perp})^{\perp}=X,\,\forall\,X\leq M$ and $(Y^{\perp})^{\perp}=Y,\,\forall\,Y\leq M^{*}$, $Y$ closed. \\
(iv) The applications of (i) are inverse to each other.
\end{prop}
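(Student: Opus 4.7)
My plan is to prove the cycle (i) $\Rightarrow$ (iv) $\Rightarrow$ (iii) $\Rightarrow$ (ii) $\Rightarrow$ (i). The first implication is immediate, since inverse anti-isomorphisms are in particular inverse bijections. Unwinding (iv) gives exactly (iii): the two compositions being identities say respectively $(X^{\perp})^{\perp}=X$ for all submodules $X\le M$ and $(Y^{\perp})^{\perp}=Y$ for all closed submodules $Y\le M^{*}$.

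The main step is (iii) $\Rightarrow$ (ii), where I must pass from closed $Y$ to arbitrary $Y$, with $\overline{Y}$ appearing on the right. The key lemma I will establish is the identity $Y^{\perp}=(\overline{Y})^{\perp}$. One inclusion is Proposition \ref{1}; for the other, I will note that for any $x\in M$ the set $\{x\}^{\perp}=\{f\in M^{*}:f(x)=0\}$ is a basic open neighborhood of $0$ in the finite topology and simultaneously a subgroup, hence clopen. If $x\in Y^{\perp}$ then $Y\subseteq\{x\}^{\perp}$, so $\overline{Y}\subseteq\{x\}^{\perp}$ as well, i.e.\ $x\in (\overline{Y})^{\perp}$. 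Applying (iii) to the closed submodule $\overline{Y}$ then yields $(Y^{\perp})^{\perp}=((\overline{Y})^{\perp})^{\perp}=\overline{Y}$.

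Finally, for (ii) $\Rightarrow$ (i), I first check that the maps are well defined with the stated codomains: $X^{\perp}$ is always closed in $M^{*}$ by the preceding proposition, and $Y^{\perp}\le M$ by the discussion in the preliminaries. Condition (ii) gives $(X^{\perp})^{\perp}=X$ and, on closed $Y$, $(Y^{\perp})^{\perp}=\overline{Y}=Y$, so the two maps are mutually inverse bijections. Both are order-reversing by Proposition \ref{1}, and any order-reversing bijection between lattices is automatically an anti-isomorphism, since joins and meets are characterised by the order and get interchanged under reversal. The only real content of the proposition lies in the topological observation that each $\{x\}^{\perp}$ is clopen, used in the passage (iii) $\Rightarrow$ (ii); everything else is formal bookkeeping.
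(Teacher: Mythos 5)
The paper offers no proof of this proposition at all — it is dismissed as ``obvious'' — so there is no argument of the paper's to compare yours against. Your proof is correct and its structure is sensible: the cycle (i) $\Rightarrow$ (iv) $\Rightarrow$ (iii) $\Rightarrow$ (ii) $\Rightarrow$ (i) is the natural one, with (iii) $\Rightarrow$ (ii) being the only step with content. The identity $Y^{\perp}=(\overline{Y})^{\perp}$ via the observation that $\{x\}^{\perp}=(xR)^{\perp}$ is a clopen subgroup (open because basic, closed because an open subgroup of a topological group is closed) is exactly the right tool, and is cleaner than deriving it indirectly from Lemma \ref{2}(ii) and the $(\cdot)^{\perp\perp\perp}=(\cdot)^{\perp}$ identity, though either route works.

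Two small points of care. First, when you ``apply (iii) to the closed submodule $\overline{Y}$'' you are tacitly using that $\overline{Y}$ is again a submodule of $M^{*}$; this is true because $\overline{Y}=\bigcap_{F}(Y+F^{\perp})$ over finitely generated $F\leq M$ is an intersection of submodules, but it deserves a sentence. Second, the bare assertion that ``any order-reversing bijection between lattices is automatically an anti-isomorphism'' is false as stated (an order-reversing bijection need not have an order-reversing inverse: map the four-element diamond onto the four-element chain fixing the two middle elements and swapping top and bottom). What saves your argument, and what you in fact establish, is that \emph{both} $X\mapsto X^{\perp}$ and $Y\mapsto Y^{\perp}$ are order-reversing by Proposition \ref{1} and are mutually inverse, which makes the pair a poset anti-isomorphism; from there meets and joins are indeed interchanged since they are order-theoretic. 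Tighten the phrasing to say this explicitly.
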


\noindent
If $F$ is a finitely generated right $R$ module then every submodule of $F^{*}$ is closed, as if $Y$ is a left submodule of $F^{*}$ and $f\in \overline{Y}$, taking $\{x_{1},\dots,x_{n}\}$ the a system of generators of $F$, there is $g\in Y$ such that $g(x_{i})=f(x_{i})$, for all $i$, so $f=g\in Y$. Also it is easy to see that $R^{n}$ has orthogonal equivalence as right module if and only if it has orthogonal equivalence as left module, and this is equivalent to $(I^{\perp})^{\perp}=I,\,\forall\,I\leq R^{n}_{R}$ and $(X^{\perp})^{\perp}=X,\,\forall\,X\leq {}_{R}R^{n}$.

\begin{defin}
We will say that a right $R$ module $M$ has orthogonal equivalence (or orthogonal isomorphism, or shortly $M$ has $\perp$ equivalence) if the equivalent statements of Proposition \ref{f.1} hold. The ring $R$ will be called with $\perp$ equivalence if $R_{R}$ (or equivalently ${}_{R}R$) is a module with orthogonal equivalence.
\end{defin}

\begin{prop}\label{f.3}
Let $M$ be a right $R$ module and $X$ a submodule of $M$. Then we have the exact sequence 
$$0\longrightarrow (0^{\perp})^{\perp}\longrightarrow M\stackrel{\Phi_{M}}{\longrightarrow}{}^{*}(M^{*})$$
\end{prop}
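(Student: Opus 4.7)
The plan is essentially to chase definitions; there is no substantive content beyond identifying $(0^{\perp})^{\perp}$ with $\ker\Phi_{M}$, and then the exactness of
$$0\longrightarrow (0^{\perp})^{\perp}\longrightarrow M\stackrel{\Phi_{M}}{\longrightarrow}{}^{*}(M^{*})$$
follows formally. First I would pin down what $(0^{\perp})^{\perp}$ actually is. By the definition of $(-)^{\perp}$ for subsets of $M$, we have $0^{\perp}=\{f\in M^{*}\mid f|_{\{0\}}=0\}$; since every $R$-module morphism sends $0$ to $0$, this gives $0^{\perp}=M^{*}$. So $(0^{\perp})^{\perp}=(M^{*})^{\perp}$, and applying the convention from the preliminaries (for $W\leq {\rm Hom}_{R}(M,N)$, $W^{\perp}\leq M$ is the annihilator submodule), we obtain
$$(0^{\perp})^{\perp}=\{m\in M\mid f(m)=0\ \text{for all}\ f\in M^{*}\}.$$

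Next I would unwind the definition of $\Phi_{M}$. By construction $\Phi_{M}(m)(f)=f(m)$ for all $f\in M^{*}$, so $\Phi_{M}(m)=0$ in ${}^{*}(M^{*})$ precisely when $f(m)=0$ for every $f\in M^{*}$, i.e.\ exactly when $m\in (0^{\perp})^{\perp}$. This identifies $\ker\Phi_{M}=(0^{\perp})^{\perp}$, which is exactly the exactness at $M$. Exactness at $(0^{\perp})^{\perp}$ is automatic since the left map is the inclusion, hence injective.

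There is no real obstacle; the only thing to be careful about is the suggestive but misleading look of the notation $(0^{\perp})^{\perp}$. Despite the double orthogonal, this is neither zero nor tautological: it is the submodule of $M$ killed by every functional, that is, the kernel of the canonical evaluation map into the bidual. The exact sequence in the statement is therefore just a restatement of the identity $\ker\Phi_{M}=(M^{*})^{\perp}$ in the notation used throughout the paper.
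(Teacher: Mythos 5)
Your proof is correct and takes essentially the same route as the paper: both identify $0^{\perp}=M^{*}$, unwind $(0^{\perp})^{\perp}=(M^{*})^{\perp}$ as the common annihilator, and observe from the definition of $\Phi_{M}$ that this set is exactly $\ker\Phi_{M}$, with exactness then being formal. Yours is simply a more spelled-out version of the paper's one-line argument.
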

\begin{proof}
For $x\in M$ we have $\Phi_{M}(x)=0\Leftrightarrow f(x)=0,\,\forall\,f\in M^{*}$ and this equivalent to $x\in (M^{*})^{\perp}=(0^{\perp})^{\perp}$, thus ${\rm ker }\,\Phi_{M}=(0^{\perp})^{\perp}$.
\end{proof}

\begin{prop}\label{f.4}
(i) For an $R$ module $M$ we have $(0^{\perp})^{\perp}=0$ if and only if $M$ is $R$ cogenerated, i.e. there is a monomorphism $M\hookrightarrow R^{I}$ for some set $I$. \\
(ii) If $\cal C$ is a class of right $R$ modules which is closed under quotients then the following are equivalent:\\
(a) $(X^{\perp})^{\perp}=X$ for all $M$ \,in \,${\cal C}$, $X<M$.\\
(b) $(0^{\perp})^{\perp}=0$ for all $M$ in $\cal C$.\\
(c) Any $M\in {\cal C}$ is cogenerated by $R$.\\
(d) $\Phi_{M}$ is a monomorphism for every $M$ in $\cal C$.
\end{prop}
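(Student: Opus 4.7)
The plan is to begin part (i) by unpacking the definition of $0^{\perp}$. Since the condition $f|_{\{0\}}=0$ is automatic, we have $0^{\perp}=M^{*}$, and hence $(0^{\perp})^{\perp}=(M^{*})^{\perp}=\bigcap_{f\in M^{*}}\ker f$. For the forward direction, if this intersection is zero then the canonical map $M\to R^{M^{*}}$, $x\mapsto (f(x))_{f\in M^{*}}$, is a monomorphism, witnessing that $M$ is $R$-cogenerated. For the converse, any embedding $M\hookrightarrow R^{I}$ is described by a family $(f_{i})_{i\in I}\subseteq M^{*}$ with $\bigcap_{i\in I}\ker f_{i}=0$; since this intersection contains $\bigcap_{f\in M^{*}}\ker f=(0^{\perp})^{\perp}$, the latter must vanish.

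For part (ii) the equivalence (b)$\Leftrightarrow$(c) is just a restatement of (i) applied to each $M\in\mathcal{C}$, and (b)$\Leftrightarrow$(d) is immediate from Proposition \ref{f.3}, which identifies $\ker\Phi_{M}$ with $(0^{\perp})^{\perp}$. The implication (a)$\Rightarrow$(b) follows by specialising to $X=0$.

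The only step that uses the hypothesis on $\mathcal{C}$ is (b)$\Rightarrow$(a). Given $M\in\mathcal{C}$ and $X\leq M$, closure under quotients gives $M/X\in\mathcal{C}$, so (b) applied to $M/X$ yields $(\{\overline{0}\}^{\perp})^{\perp}=0$ inside $M/X$. Invoking the identity $(\{\overline{0}\}^{\perp})^{\perp}=(X^{\perp})^{\perp}/X$ from Lemma \ref{2}(i) then forces $(X^{\perp})^{\perp}/X=0$, that is $(X^{\perp})^{\perp}=X$.

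There is no single hard step here; the only non-formal content is recognising that the quotient-closure of $\mathcal{C}$ is precisely what transfers the $X=0$ statement in (b) to arbitrary submodules via Lemma \ref{2}(i), after which the remaining equivalences are obtained by direct citation of Proposition \ref{f.3} and part (i).
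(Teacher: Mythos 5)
Your proof is correct and follows essentially the same route as the paper: part (i) via the canonical map $x\mapsto(f(x))_{f\in M^{*}}$ and composition with projections, (b)$\Leftrightarrow$(c) by (i), (b)$\Leftrightarrow$(d) by Proposition \ref{f.3}, and (b)$\Rightarrow$(a) by passing to $M/X\in\mathcal{C}$ and invoking Lemma \ref{2}(i). You are in fact slightly more careful than the paper, which misquotes the formula from Lemma \ref{2}(i) as $(\{\overline{0}\}^{\perp})^{\perp}=(X^{\perp})^{\perp}$ rather than the correct $(\{\overline{0}\}^{\perp})^{\perp}=(X^{\perp})^{\perp}/X$ that you use.
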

\begin{proof}
(i) If $(0^{\perp})^{\perp}=0$ then take $I=M^{*}$ and $M\stackrel{i}{\longrightarrow}R^{I},\,i(x)=(f(x))_{f\in I}$; then of course $i$ is a monomorphism as $i(x)=0$ if and only if $f(x)=0,\,\forall f\in I=M^{*}$ i.e. $x\in (0^{\perp})^{\perp}=0$. Conversely, given a monomorphism $M\stackrel{i}{\hookrightarrow}R^{I}$, taking $\pi_{j}$ the canonical projections for all $j\in I$, we obtain the morphisms $f_{j}=\pi_{j}\circ i\in M^{*}$ and then $x\in (0^{\perp})^{\perp}=(M^{*})^{\perp}$ implies $f_{j}(x)=0,\,\forall\,j\in I$, i.e. $i(x)=0$ so $x=0$, as $i$ is injective. Thus $(0^{\perp})^{\perp}=0$. \\
(ii) (b) $\Leftrightarrow$ (c) by (i). (a) $\Leftrightarrow$ (b) follows as $\cal C$ is closed under quotient objects and denoting $\overline{0}$ the zero element of $M/X\in {\cal C}$ we have $(\{\overline{0}\}^{\perp})^{\perp}=(X^{\perp})^{\perp}$ from Lemma \ref{2}. Equivalence with (d) follows from Proposition \ref{f.3}
\end{proof}

\begin{prop}\label{f.5}
Suppose $R_{R}$ is a module with $\perp$ equivalence. Then $R$ contains all left simple modules and all right simple modules (up to an isomorphism; this is called a right - and left- Kasch ring).
\end{prop}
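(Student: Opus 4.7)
The plan is to translate the abstract $\perp$-equivalence hypothesis into a concrete annihilator statement by specializing to $M = R_{R}$. Under the identification $R^{*} \simeq {}_{R}R$, $x \mapsto \varphi_{x}$ with $\varphi_{x}(r)=xr$, the orthogonal $I^{\perp}$ of a right ideal $I \leq R_{R}$ becomes the left annihilator $\{x\in R \mid xI=0\}$, which is a left ideal of $R$; similarly $X^{\perp}$ of a left ideal $X \leq {}_{R}R$ becomes its right annihilator. Since $R_{R}$ is finitely generated by $1$, the set $R^{\perp}=0$ lies in the basis of neighborhoods of zero, so the finite topology on $R^{*}$ is discrete and every left ideal of $R$ is closed. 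Thus the hypothesis of $\perp$-equivalence on $R_{R}$, via Proposition \ref{f.1}, simplifies to: $(I^{\perp})^{\perp}=I$ for every right ideal $I\leq R_{R}$, and symmetrically for left ideals of $R$.

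For the right Kasch property, let $S$ be a simple right $R$-module and write $S\simeq R/I$ for some maximal right ideal $I\subsetneq R$. Then $I=(I^{\perp})^{\perp}\neq R = 0^{\perp}$, so necessarily $I^{\perp}\neq 0$; pick $0\neq x\in I^{\perp}$, i.e. $xI=0$. The morphism $\varphi_{x}:R_{R}\to R_{R}$, $r\mapsto xr$, then vanishes on $I$ and descends to a morphism $\overline{\varphi_{x}}:R/I\to R$ with $\overline{\varphi_{x}}(1+I)=x\neq 0$. Simplicity of $R/I$ forces $\overline{\varphi_{x}}$ to be injective, so $S\simeq R/I$ embeds in $R_{R}$.

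For the left Kasch property, recall the observation made just before the definition of $\perp$-equivalence: $R_{R}$ has $\perp$-equivalence if and only if ${}_{R}R$ does. Hence the same argument applies verbatim on the left side, replacing $\varphi_{x}$ by $\psi_{x}(r)=rx$ and right ideals by left ideals, showing that every simple left $R$-module embeds into ${}_{R}R$.

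The argument involves no real obstacle; the only step that requires any care is the correct bookkeeping of the identification $R^{*}\simeq {}_{R}R$ and the observation that the finite topology on $R^{*}$ is discrete, so that the closedness condition in Proposition \ref{f.1} becomes vacuous and the hypothesis reduces to the purely algebraic identity $(I^{\perp})^{\perp}=I$ on (right and left) ideals of $R$.
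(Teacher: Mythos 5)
Your proof is correct and takes essentially the same approach as the paper: both reduce to showing that for a maximal right ideal $I$ the orthogonal $I^{\perp}$ (i.e.\ the left annihilator of $I$ under the identification $R^{*}\simeq {}_{R}R$) is nonzero, using $(I^{\perp})^{\perp}=I\neq R=0^{\perp}$, and then obtain a nonzero (hence injective) morphism $R/I\to R$. The paper packages this via the isomorphism $(R/I)^{*}\simeq I^{\perp}$ and concludes $S^{*}\neq 0$, whereas you spell out the embedding $\overline{\varphi_{x}}$ explicitly, but the underlying argument is the same.
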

\begin{proof}
It is easy to see that for every right ideal $I$ of $R$ we have the isomorphism of left $R$ modules $(\frac{R}{I})^{*}\simeq I^{\perp}$, given by $I^{\perp}\ni f\mapsto f\circ \pi\in (\frac{R}{I})^{*}$, with $\pi:R\longrightarrow R/I$ the canonical projection. Then if $S$ is simple right module there is a maximal right ideal $M<R$ and an isomorphism $S\simeq \frac{R}{M}$. Then $S^{*}\simeq (\frac{R}{M})^{*}\simeq M^{\perp}\neq 0$ because if $M^{\perp}=0$ then $M=(M^{\perp})^{\perp}=0^{\perp}=R$, which contradicts the maximality of $M$. In a similar way one can see that $R$ contains all the isomorphism types of left $R$ modules.
\end{proof}

\noindent
We shall say a right (or left) $R$ module is $n$ generated if it has a system of $n$ generators.

\begin{lemma}\label{f.6}
Let $X$ be a right $R$ module such that every monomorphism $i:X\hookrightarrow M$ with the property that $M/{\rm Im}\,i$ is 1-generated splits. Then $X$ is an injective module.
\end{lemma}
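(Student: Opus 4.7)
The plan is to verify Baer's criterion for injectivity, namely that every morphism $f: I \to X$ from a right ideal $I \lhd R$ to $X$ extends to a morphism $R \to X$. The bridge between the hypothesis (which talks about extensions of $X$ with $1$-generated cokernel) and Baer's criterion (which talks about extensions of maps defined on right ideals) is the pushout construction.

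Given $f: I \to X$, I would form the pushout $M$ of $f$ along the inclusion $j: I \hookrightarrow R$; concretely $M = (X \oplus R)/K$ where $K = \{(f(i), -i) : i \in I\}$. Writing $\iota: X \to M$ and $g: R \to M$ for the structural maps, one checks routinely that $\iota$ is a monomorphism (since $j$ is) and that $M/\iota(X) \cong R/I$, so the cokernel of $\iota$ is $1$-generated (by the class of $1$). Therefore the hypothesis applies to $\iota: X \hookrightarrow M$: there exists a retraction $\pi: M \to X$ with $\pi \circ \iota = \mathrm{id}_X$.

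Setting $\tilde f = \pi \circ g : R \to X$, for any $i \in I$ one has
$$\tilde f(i) = \pi(g(j(i))) = \pi(\iota(f(i))) = f(i),$$
so $\tilde f$ extends $f$. Invoking Baer's criterion then concludes that $X$ is injective.

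The main potential obstacle is simply keeping the diagram chase in the pushout clean enough that the two facts ``$\iota$ is a monomorphism'' and ``$M/\iota(X) \cong R/I$'' are transparent; both follow from the standard fact that in a module category, pushing out a monomorphism along any map yields a monomorphism with the same cokernel. Apart from that, the argument is essentially a formal application of Baer's criterion, and no delicate point from the earlier sections of the paper is actually needed here.
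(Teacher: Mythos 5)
Your argument is correct, but it takes a genuinely different route from the paper. You reduce to Baer's criterion: given $f\colon I\to X$ with $I$ a right ideal, you form the pushout $M=(X\oplus R)/\{(f(i),-i):i\in I\}$ of $f$ along $I\hookrightarrow R$, observe that the structural map $\iota\colon X\to M$ is a monomorphism with $M/\iota(X)\cong R/I$ (hence $1$-generated), invoke the hypothesis to get a retraction $\pi$ of $\iota$, and check that $\pi\circ g$ extends $f$; the diagram chase is exactly as you say. The paper instead proves that $X$ is a direct summand of \emph{every} overmodule $M\supseteq X$ directly: it applies Zorn's lemma to the set $\mathcal{L}$ of nonzero submodules of $M$ meeting $X$ trivially, takes a maximal complement $N$, and uses the splitting hypothesis (applied to the image of $X$ in $(X+N+xR)/N$ for a suitable $x$) to derive a contradiction if $X+N\neq M$. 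Both arguments are valid. Your version is shorter and more structural, outsourcing the transfinite step to the standard proof of Baer's criterion, and it makes the role of the ``$1$-generated cokernel'' hypothesis transparent (it is precisely what makes the pushout along $I\hookrightarrow R$ fall under the hypothesis). The paper's version is self-contained and elementary, showing the ``absolute direct summand'' characterisation of injectivity from scratch, at the cost of a somewhat longer Zorn's lemma argument.
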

\begin{proof}
Let $M$ be a right $R$ module such that $X<M$ (we identify $X$ with its image in $M$) and suppose $X\neq M$. Let ${\cal L}=\{Y<M\mid Y\neq 0 \,{\rm and}\,X\cap Y=0\}$. Then ${\cal L}\neq \emptyset$, because if $x\in M\setminus X$ then as $(X+xR)/X\neq 0$ is finitely generated then the hypothesis shows that there is $Y<X+xR$ such that $X\stackrel{.}{+}Y=X+xR$ and then $Y\neq 0$ as $x\notin X$, so $Y\in {\cal L}$. We can easily see that ${\cal L}$ is inductive, because if $(Y_{i})_{i\in I}$ is a totally ordered family of elements of ${\cal L}$ then $\bigcup\limits_{i\in I}Y_{i}$ is its majorant in ${\cal L}$. Take $N$ a maximal element of ${\cal L}$ and suppose $X+N\neq M$. Then there is $x\in M\setminus (X+N)$ and as $(X+N+xR)/(X+N)$ is finitely generated, by the hypothesis we can find $Y<M$ such that $X+N+Y=X+N+xR$ and $(X+N)\cap Y=0$. An easy computation shows now that $(N+Y)\cap X=0$ and so $N+Y=N$ by the maximality of $N$. Thus we obtain $X+N+Y=X+N=X+N+xR$ which is a contradiction, because $x\notin X+N$. We find that $X$ is a direct summand in $M$ for every module $M$ such that $X\hookrightarrow M$, so $X$ is injective in ${\cal M}_{R}$.
\end{proof}

\begin{prop}\label{f.7}
Let $R$ be a ring with $\perp$ equivalence. If $R\stackrel{j}{\hookrightarrow} X$ is a monomorphism of right (left) $R$ modules and $X$ is $R$ cogenerated then $j$ splits.
\end{prop}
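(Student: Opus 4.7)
\noindent The plan is to reduce the splitting problem to showing a single left ideal $L\leq R$ equals $R$, then to kill $L^{\perp}$ using $\perp$ equivalence. Set $x_{0}=j(1)$. Because $j$ is right $R$-linear, $j(r)=x_{0}r$ for every $r\in R$; hence an $R$-linear retraction $f:X\to R$ for $j$ is precisely an $f\in X^{*}$ with $f(x_{0})=1$, since $(f\circ j)(r)=f(x_{0})r$. Under the identification $R^{*}\simeq{}_{R}R$, the dual map $j^{*}:X^{*}\longrightarrow R$ sends $f$ to $f(x_{0})$ and is left $R$-linear, so its image
$$L\;:=\;\{\,f(x_{0})\mid f\in X^{*}\,\}$$
is a left ideal of $R$. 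The goal becomes $L=R$.

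\noindent Next I would invoke $\perp$ equivalence on the left side of $R$: every left ideal $L\leq{}_{R}R$ satisfies $(L^{\perp})^{\perp}=L$, so $L=R$ follows as soon as $L^{\perp}=0$. Take $a\in L^{\perp}$, so $la=0$ for all $l\in L$; equivalently $f(x_{0})\cdot a=0$ for every $f\in X^{*}$. By right $R$-linearity of $f$, this rewrites as $f(x_{0}a)=0$ for all $f\in X^{*}$, i.e.\ $x_{0}a\in \ker\Phi_{X}$. Since $X$ is $R$ cogenerated, Proposition \ref{f.4} forces $\ker\Phi_{X}=0$, hence $x_{0}a=0$. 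But $x_{0}a=j(a)$ and $j$ is injective, so $a=0$. Thus $L^{\perp}=0$, which gives $L=(L^{\perp})^{\perp}=0^{\perp}=R$; picking $f\in X^{*}$ with $f(x_{0})=1$ produces the desired retraction. The version for left modules is symmetric, using that $R_{R}$ has $\perp$ equivalence iff ${}_{R}R$ does, as observed after Proposition \ref{f.1}.

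\noindent There is no real obstacle once the dualization is set up; the one thing to keep straight is handedness. Because $X^{*}$ carries the left $R$-module structure $(r\cdot f)(x)=rf(x)$, the image of $j^{*}$ is genuinely a \emph{left} ideal, and the identity $(L^{\perp})^{\perp}=L$ must be applied to a left submodule of $R$ — this is exactly why the two-sided nature of $\perp$ equivalence of $R$ (noted in the paragraph preceding Definition \ref{f.2}) is needed, and it is the only point where one has to be careful.
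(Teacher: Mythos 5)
Your proof is correct, and it is a genuinely different route from the paper's. The paper chooses an explicit embedding $\sigma:X\hookrightarrow R^{I}$, writes $(x_{i})_{i\in I}=\sigma(j(1))$, passes from $\bigcap_{i}(Rx_{i})^{\perp}=0$ to $\bigl(\sum_{i}Rx_{i}\bigr)^{\perp}=0$ via Proposition \ref{3}, applies $\perp$ equivalence to get $\sum_{i}Rx_{i}=R$, and then uses a finite-support argument to assemble a retraction $\varphi_{y}\circ\pi_{F}\circ\sigma$. You instead stay intrinsic: you identify the image $L$ of the dual map $j^{*}$ as a left ideal, observe that $a\in L^{\perp}$ forces $x_{0}a\in\ker\Phi_{X}$, invoke the equivalence $X$ $R$-cogenerated $\Leftrightarrow\ker\Phi_{X}=0$ (Propositions \ref{f.3} and \ref{f.4}) to conclude $x_{0}a=0$ and hence $a=0$, and then a single application of $(L^{\perp})^{\perp}=L$ gives $L=R$, so some $f\in X^{*}$ with $f(x_{0})=1$ is the retraction. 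The two proofs are cousins — both ultimately show that the left ideal of values $\{f(j(1))\mid f\in X^{*}\}$ is all of $R$ — but yours avoids picking a cogenerating embedding, avoids Proposition \ref{3} and the subsequent finiteness argument, and replaces them with the cleaner observation that $R$-cogeneration is exactly injectivity of $\Phi_{X}$; this makes the argument shorter and more conceptual, at the cost of being slightly less explicit about the retraction. One small typographical note: you cite a label \texttt{f.2} for the definition of $\perp$ equivalence, but the paper's definition is unlabeled, so that reference would not resolve; mathematically the point you make there (that $\perp$ equivalence on $R_{R}$ is equivalent to $\perp$ equivalence on ${}_{R}R$, which lets you apply $(L^{\perp})^{\perp}=L$ to a left ideal) is exactly the remark the paper makes just before the definition and is used correctly.
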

\begin{proof}
Consider $X\stackrel{\sigma}{\hookrightarrow}R^{I}$ a monomorphism and let $(x_{i})_{i\in I}=\sigma(j(1))$. Then we have $(x_{i}r)_{i\in I}=\sigma(j(1))r=\sigma(j(r))$ and as $j,\sigma$ are injective we see that $x_{i}r=0,\,\forall\,i\in I$ if and only if $r=0$. This shows that $\bigcap\limits_{i\in I}Rx_{i}^{\perp}=0$. Then we have $0=\bigcap\limits_{i\in I}Rx_{i}^{\perp}=(\sum\limits_{i\in I}Rx_{i})^{\perp}$ (by Proposition \ref{3}), so $\sum\limits_{i\in I}Rx_{i}=((\sum\limits_{i\in I}Rx_{i})^{\perp})^{\perp}=0^{\perp}=R$. Then we find that there is $F$ a finite subset of $I$ such that $\sum\limits_{i\in F}Rx_{i}=R$, thus there are $(y_{i})_{i\in F}\in R$ such that $\sum\limits_{i\in F}y_{i}x_{i}=1$. Now if we denote by $\pi_{F}$ the projection of $R^{I}$ on $R^{F}$, $\pi_{F}((r_{i})_{i\in I})=(r_{i})_{i\in F}$ and by $y=(y_{i})_{i\in F}\in R^{F}=R^{(F)}$, then $\varphi_{y}(\pi_{F}(\sigma(j(r))))=\varphi_{y}(\pi_{F}((x_{i}r)_{i\in I}))=\varphi_{y}((x_{i}r)_{i\in F})=\sum\limits_{i\in F}y_{i}x_{i}r=r$, so $\varphi_{y}\circ\pi_{F}\circ\sigma\circ j=id_{R}$, showing that the morphism of right modules $\varphi_{y}\circ\pi_{F}\circ\sigma:X\longrightarrow R$ is a split for $j$.
\end{proof}

\begin{lemma}\label{f.8}
$R^{n}$ has orthogonal equivalence (as left or right $R$ module) if and only if every $n$ generated right (or left) module has orthogonal equivalence.
\end{lemma}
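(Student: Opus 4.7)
The forward direction of the iff is immediate since $R^n$ is itself $n$-generated, so the hypothesis applied to $M = R^n$ yields $\perp$-equivalence for $R^n$. For the converse, I would suppose $R^n$ has $\perp$-equivalence (equivalently on both sides, by the remark preceding the definition) and let $M$ be an $n$-generated right $R$-module; fix a surjection $\pi \colon R^n \twoheadrightarrow M$ with kernel $K$. The case of $n$-generated left modules is symmetric. The plan is to verify the two conditions of Proposition \ref{f.1}(iii) for $M$ by transferring questions about submodules of $M$ and closed submodules of $M^*$ to analogous questions in $R^n$ and $(R^n)^*$, where the hypothesis applies.

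The crucial first step is to identify $M^*$ with $K^{\perp} \subseteq (R^n)^*$ via $f \mapsto f \circ \pi$ and check that this is an isomorphism of topological left $R$-modules. The algebraic statement is routine; for the topology I would observe that the finitely generated submodules of $M$ are exactly the images $\pi(F')$ for $F' \leq R^n$ finitely generated, so under the identification the basic neighborhood $\pi(F')^{\perp}$ of $0 \in M^*$ corresponds to $F'^{\perp} \cap K^{\perp}$, i.e.\ the trace on $K^{\perp}$ of the basic neighborhood $F'^{\perp}$ of $(R^n)^*$. One also needs that $K^{\perp}$ itself is closed in $(R^n)^*$, which follows by the same one-line argument used earlier for such annihilators: if $\tilde f$ lies in the closure of $K^{\perp}$ and $k \in K$, pick $\tilde g \in K^{\perp}$ agreeing with $\tilde f$ on $\{k\}$, so $\tilde f(k) = \tilde g(k) = 0$.

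With these identifications in hand the remainder is clean pullback. Given $X \leq M$, set $L = \pi^{-1}(X) \geq K$; then $X^{\perp}$ corresponds to $L^{\perp} \subseteq K^{\perp}$ (membership in $K^{\perp}$ is automatic since $K \leq L$), and applying $\perp$ once more, $\pi^{-1}\bigl((X^{\perp})^{\perp}\bigr) = (L^{\perp})^{\perp} = L$ by the $\perp$-equivalence of $R^n$, giving $(X^{\perp})^{\perp} = L/K = X$. Given a closed submodule $Y \leq M^*$, its image $\tilde Y \subseteq K^{\perp}$ is closed in $K^{\perp}$ and hence in $(R^n)^*$; the analogous translation yields $\pi^{-1}(Y^{\perp}) = \tilde Y^{\perp}$, so $(Y^{\perp})^{\perp}$ corresponds to $(\tilde Y^{\perp})^{\perp}$, which equals $\tilde Y$ by the $\perp$-equivalence of $R^n$, applicable precisely because $\tilde Y$ is closed. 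Thus $(Y^{\perp})^{\perp} = Y$, and Proposition \ref{f.1}(iii) delivers the $\perp$-equivalence of $M$.

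The main obstacle I expect is the topological bookkeeping: verifying that the finite topology on $M^*$ coincides with the subspace topology on $K^{\perp} \subseteq (R^n)^*$, and consequently that closedness of a submodule $Y \leq M^*$ really does transfer to closedness of $\tilde Y \leq (R^n)^*$. This is essential because the $\perp$-equivalence of $R^n$ on the dual side applies only to closed submodules of $(R^n)^*$, so without the topological transfer the pullback argument for the second condition of Proposition \ref{f.1}(iii) would collapse.
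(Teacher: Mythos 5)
Your proof is correct and uses essentially the same mechanism as the paper: identify $M^{*}$ with $K^{\perp}\leq(R^{n})^{*}$ via $f\mapsto f\circ\pi$ and pull the double-orthogonal computations back to $R^{n}$ and $(R^{n})^{*}$, where the hypothesis applies. The one place where you diverge in emphasis is the topological bookkeeping, and here you are working harder than necessary: since $M$ is $n$-generated, \emph{every} submodule of $M^{*}$ is already closed (this is the one-line observation the paper makes just before the definition of $\perp$-equivalence, applied to the finitely generated module $M$), and likewise every submodule of $(R^{n})^{*}$ is closed, so there is no need to verify that the finite topology on $M^{*}$ agrees with the subspace topology on $K^{\perp}$ or that closedness transfers — the hypothesis on $R^{n}$ and condition (iii) of Proposition \ref{f.1} for $M$ both simplify to unconditional statements about all submodules. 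The paper exploits this to avoid the topological discussion entirely; your argument reaches the same endpoint and is not wrong, just carries some redundant scaffolding.
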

\begin{proof}
Suppose $R^{n}$ has $\perp$ equivalence. Let $F=R^{n}/X$ be a right $n$ generated $R$ modules and $\pi:R^{n}\longrightarrow F$ the canonical projection. For each $g\in X^{\perp}$ ($X<R^{n}$) we denote by $\overline{g}\in F^{*}$ the (unique) morphism for which $\overline{g}\circ\pi=g$ and with $\hat{x}=\pi(x)$ - the class of an element $x\in R^{n}$. Now we see that if $Y<F^{*}$ and $Z=\{\alpha\circ\pi\mid\alpha\in Y\}$, then $Y=\{\overline{g}\mid g\in Z\}$, $Y^{\perp}=\{\hat{x}\mid \overline{g}(\hat{x})=0,\,\forall\,g\in Z\}=Z^{\perp}/X$ ($Z\subseteq X^{\perp}$ so $Z^{\perp}\supseteq (X^{\perp})^{\perp}=X$) and $(Y^{\perp})^{\perp}=\{\overline{g}\mid\overline{g}(\hat{x})=0,\,\forall\,\hat{x}\in Z^{\perp}/X\}=\{\overline{g}\mid g(x)=0,\,\forall\,x\in Z^{\perp}\}=\{\overline{g}\mid g\in (Z^{\perp})^{\perp}=Z\}=Y$.\\
Now if $Y<F$ and $Z=\pi^{-1}(Y)$ then $Y^{\perp}=\{\overline{g}\mid\overline{g}(\hat{x})=0,\,\forall\,\hat{x}\in Y\}=\{\overline{g}\mid g(x)=0,\,\forall\,x\in Z\}=\{\overline{g}\mid g\in Z^{\perp}\}$ and $(Y^{\perp})^{\perp}=\{\hat{x}\mid\overline{g}(\hat{x})=g(x)=0,\,\forall\,g\in Z^{\perp}\}=\{\hat{x}\mid x\in (Z^{\perp})^{\perp}=Z\}=Y$.
\end{proof}

\begin{th} The following assertions are equivalent: \\
(i) Every right $R$ module has $\perp$ equivalence. \\
(ii) Every finitely generated module has $\perp$ equivalence. \\
(iii) Every left $R$ module has $\perp$ equivalence. \\
(iv) Every finitely generated module has $\perp$ equivalence.\\
(v) $R$ is a PF ring (both left and right).\\
(vi) $(X^{\perp})^{\perp}=X$ for all $X<M$ in ${\cal M}_{R}$ or in ${}_{R}{\cal M}$.\\
(vii) $R^{2}$ has $\perp$ equivalence.
\end{th}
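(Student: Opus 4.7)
The plan is to prove the equivalences by establishing the cycle
$(v) \Rightarrow (i) \Rightarrow (ii) \Rightarrow (vii) \Rightarrow (v)$,
together with the symmetric route
$(v) \Rightarrow (iii) \Rightarrow (iv) \Rightarrow (vii)$ and the side chain
$(i) \Rightarrow (vi) \Rightarrow (vii)$. All implications except $(vii) \Rightarrow (v)$
are near-immediate consequences of the machinery built up earlier, so the actual work is
concentrated in that final step.

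For $(v) \Rightarrow (i)$, I would combine the two halves of Lemma~\ref{2}: right PF-ness of $R$ gives
$(X^{\perp})^{\perp} = X$ for any submodule $X$ of a right $R$-module $M$ by Lemma~\ref{2}(i), while
left PF-ness gives $(Y^{\perp})^{\perp} = \overline{Y}$ for any left submodule $Y \leq M^{*}$ by
Lemma~\ref{2}(ii); Proposition~\ref{f.1}(ii) then packages these two biorthogonality identities as
$\perp$ equivalence of $M$. The implication $(v) \Rightarrow (iii)$ is obtained by interchanging the
roles of left and right. The implications $(i) \Rightarrow (ii) \Rightarrow (vii)$,
$(iii) \Rightarrow (iv) \Rightarrow (vii)$ and $(i) \Rightarrow (vi)$ are immediate, since $R^{2}$
is $2$-generated. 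For $(vi) \Rightarrow (vii)$: $(vi)$ applied on both sides to $R^{2}$ provides
$(I^{\perp})^{\perp} = I$ for right submodules $I \leq R^{2}_{R}$ and $(X^{\perp})^{\perp} = X$ for
left submodules $X \leq {}_{R}R^{2}$, which by the remark immediately following Lemma~\ref{f.8}
is precisely $\perp$ equivalence of $R^{2}$.

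The heart of the proof is $(vii) \Rightarrow (v)$. Assuming $R^{2}$ has $\perp$ equivalence,
Lemma~\ref{f.8} upgrades this to $\perp$ equivalence of every $2$-generated right and every
$2$-generated left $R$-module; in particular $R$ itself (being $1$-generated, hence $2$-generated)
has $\perp$ equivalence on both sides, and every $2$-generated module is $R$-cogenerated by
Proposition~\ref{f.4}. To see $R_{R}$ is injective, I would apply Lemma~\ref{f.6}: any embedding
$R \hookrightarrow M$ with $M/R$ being $1$-generated makes $M$ itself $2$-generated, hence
$R$-cogenerated, so Proposition~\ref{f.7} (applicable because $R$ has $\perp$ equivalence)
guarantees the embedding splits; Lemma~\ref{f.6} then yields injectivity of $R_{R}$, and the left
side is symmetric. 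To upgrade injectivity to the cogenerator property, given any right module $N$
and nonzero $x \in N$, the cyclic submodule $xR$ is $2$-generated and hence $R$-cogenerated, so some
$f \colon xR \to R$ satisfies $f(x) \neq 0$; injectivity of $R_{R}$ extends $f$ to $N$, which shows
$R$ cogenerates ${\cal M}_{R}$. Thus $R$ is right (and symmetrically left) PF, giving $(v)$.
The main obstacle is precisely this last step, as it requires careful interlocking of
Lemmas~\ref{f.6} and~\ref{f.8} with Propositions~\ref{f.4} and~\ref{f.7}, along with bookkeeping
of the left/right conventions throughout.
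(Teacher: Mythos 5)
Your proposal matches the paper's proof in overall architecture: the chains $(v) \Rightarrow (i) \Rightarrow (ii) \Rightarrow (vii)$ and $(v) \Rightarrow (iii) \Rightarrow (iv) \Rightarrow (vii)$, and especially the crucial $(vii) \Rightarrow (v)$ via Lemma~\ref{f.8}, Proposition~\ref{f.4}, Proposition~\ref{f.7} and Lemma~\ref{f.6}, are identical. One genuine (minor) variation is in the cogenerator step of $(vii) \Rightarrow (v)$: the paper invokes Proposition~\ref{f.5} to conclude $R$ is left and right Kasch and then uses the standard fact that an injective module containing a copy of every simple module is a cogenerator, whereas you argue directly that every cyclic (hence $2$-generated) module is $R$-cogenerated by Proposition~\ref{f.4} and extend a separating map to the ambient module by injectivity. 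Both work; yours is slightly more self-contained, the paper's makes explicit use of a proposition it has already stated. There is one small logical slip to flag: you route $(i) \Rightarrow (vi) \Rightarrow (vii)$ and call $(i) \Rightarrow (vi)$ "immediate", but in the next step you explicitly read $(vi)$ as giving biorthogonality on \emph{both} the right and the left side of $R^{2}$; statement $(i)$ only concerns right modules, so by itself it does not immediately give $(vi)$ in this conjunctive reading (the paper avoids this by proving $(v) \Rightarrow (vi)$, since $(v)$ supplies both PF conditions). This is easily patched, e.g.\ by replacing $(i) \Rightarrow (vi)$ with $(v) \Rightarrow (vi)$, or by noting that once $(i) \Leftrightarrow (iii)$ is established the conjunction follows. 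Also, the cited "remark immediately following Lemma~\ref{f.8}" actually appears in the text preceding Definition~2.2, but the content you invoke is the right one.
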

\begin{proof}
$\bullet$ (v) $\Rightarrow$ (i) and (v) $\Rightarrow$ (vi) follow from Lemma \ref{2} so we have the implications (v) $\Rightarrow$ (i) $\Rightarrow$ (ii) $\Rightarrow$ (vii) and (v) $\Rightarrow$ (vi) $\Rightarrow$ (vii)\\
$\bullet$ (v) $\Rightarrow$ (iii) $\Rightarrow$ (iv) $\Rightarrow$ (vii) is the left symmetric of (v) $\Rightarrow$ (i) $\Rightarrow$ (ii) $\Rightarrow$ (vii). \\
$\bullet$ (vii) $\Rightarrow$ (v) If $R^{2}$ has $\perp$ equivalence, then by Lemma \ref{f.8} we have that any 2 generated right (and any left) module has $\perp$ equivalence, in particular $R$ has orthogonal equivalence. Now let $R\stackrel{i}{\hookrightarrow}X$ be a monomorphism in ${\cal M}_{R}$ such that $X/i(R)$ is 1 generated. Then as $X$ has $\perp$ equivalence, Proposition \ref{f.4} shows that $X$ is $R$ cogenerated as right $R$ module. Now by Proposition \ref{f.7} $i$ splits, as $X$ is $R$ cogenerated and $R$ has $\perp$ equivalence. Then we can apply Lemma \ref {f.6} and obtain that $R_{R}$ is injective. Because $R$ has $\perp$ equivalence, by Proposition \ref{f.5} we obtain that $R_{R}$ contains all isomorphism types of simple right modules, and as $R_{R}$ is injective, we obtain that $R_{R}$ is an injective cogenerator of ${\cal M}_{R}$, i.e. a right RF ring. Similarly we can show that $R$ is also a left PF ring.
\end{proof}

\begin{cor}
If $R$ is a PF ring, then $F\simeq {}^{*}(F^{*})$ by $\Phi_{F}$ for every finitely generated left module (the analogue holds for right modules).
\end{cor}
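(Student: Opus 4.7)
The plan is to show $\Phi_{F}$ is both injective and surjective by combining two facts already established in the paper; the corollary is essentially a bookkeeping consequence of Lemma \ref{2} and Proposition \ref{f.3}--\ref{f.4}, with no new technical work required.

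First I would handle surjectivity. Inside the proof of Lemma \ref{2}, it is shown that if ${}_{R}R$ is injective then $\Phi_{F}$ is an epimorphism for every finitely generated right module $F$: one chooses an epimorphism $\pi:R^{n}\to F$, applies $(-)^{*}$ and then ${}^{*}(-)$, uses injectivity of ${}_{R}R$ to preserve exactness on the second dual, and concludes by the naturality diagram together with $\Phi_{R^{n}}$ being an isomorphism. Since $R$ is PF on both sides, $R_{R}$ is also injective, so the symmetric version of that argument applies to finitely generated left modules $F$: one picks $\pi:R^{n}\to F$ in ${}_{R}\mathcal{M}$, uses $R_{R}$ injective to get an epimorphism $(R^{n})^{*\ast}\to F^{*\ast}$, and the same commutative square forces $\Phi_{F}$ to be surjective.

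Next I would handle injectivity. Because $R$ is left PF, ${}_{R}R$ is a cogenerator of ${}_{R}\mathcal{M}$, so every left $R$-module (in particular our finitely generated $F$) is $R$-cogenerated. By Proposition \ref{f.4}(i) applied on the left side, this is equivalent to $(0^{\perp})^{\perp}=0$ inside $F$. Proposition \ref{f.3} (again in its left-module form) then identifies $\ker \Phi_{F}$ with $(0^{\perp})^{\perp}=0$, so $\Phi_{F}$ is a monomorphism. Combining with the previous paragraph, $\Phi_{F}$ is an isomorphism, which is exactly the claim. The analogous statement for finitely generated right modules follows by swapping the roles of ``left'' and ``right'' throughout, using the PF hypothesis on the other side.

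There is no real obstacle here: the only thing to verify is that both ingredients (the surjectivity argument of Lemma \ref{2} and the cogenerator argument of Proposition \ref{f.4}) are genuinely symmetric in left/right, and this is immediate because the PF hypothesis is assumed on both sides.
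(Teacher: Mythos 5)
Your proof is correct and follows essentially the same route the paper takes: surjectivity comes from the dualization argument inside the proof of Lemma \ref{2} applied to $\pi\colon R^{n}\to F$, and injectivity comes from identifying $\ker\Phi_{F}$ with $(0^{\perp})^{\perp}$ via Proposition \ref{f.3} and then using the cogenerator hypothesis. You are a bit more explicit than the paper's terse proof in invoking Proposition \ref{f.4}(i) to get $(0^{\perp})^{\perp}=0$, and in flagging that for left modules $F$ it is $R_{R}$ (not ${}_{R}R$) whose injectivity drives the surjectivity step, but these are small clarifications rather than a different argument.
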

\begin{proof}
Proposition \ref{f.3} shows that $\Phi_{F}$ is injective. By the same argument as in the proof of Lemma \ref{2} we have that ${}_{R}R$ injective implies that $\Phi_{F}$ is an epimorphism and the conclusion is proved.
\end{proof}

\begin{cor}
$R$ is a PF ring if and only if for every finitely generated right (or left) $R$ module $F$, the lattice of the submodules of $F$ is anti isomorphic to the lattice of the submodules of $F^{*}$ via the $\perp$ applications of Proposition \ref{f.1}, equivalently, the dual lattice of the submodules of any finitely generated right module is isomorphic (via $\perp$ applications) to the lattice of the submodules of the dual of that module.
\end{cor}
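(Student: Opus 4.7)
The approach is to reduce the corollary to the preceding theorem by invoking the observation recorded immediately after Proposition \ref{f.1}: if $F$ is finitely generated, then every (left) submodule of $F^{*}$ is automatically closed in the finite topology, because an element of $\overline{Y}$ can be made to agree with some $g\in Y$ on a fixed finite generating set of $F$, hence on all of $F$. Once this is in place, the lattice of submodules of $F^{*}$ and the lattice of closed submodules of $F^{*}$ coincide, so the anti-isomorphism asserted in the corollary is literally the same object as the anti-isomorphism supplied by Proposition \ref{f.1}.

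For the direct implication, I would assume $R$ is PF and apply the equivalence (v)$\Rightarrow$(ii) of the preceding theorem to conclude that every finitely generated right (or left) $R$ module $F$ has $\perp$ equivalence. Proposition \ref{f.1}(i) then gives mutually inverse anti-isomorphisms between the lattice of submodules of $F$ and the lattice of closed submodules of $F^{*}$; the closedness remark lets me drop the word "closed", producing the claimed anti-isomorphism with all of Sub($F^{*}$).

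For the converse, I would assume the stated lattice anti-isomorphism for every finitely generated $F$. Unwinding what it means for $X\mapsto X^{\perp}$ and $Y\mapsto Y^{\perp}$ to be mutually inverse bijections yields $(X^{\perp})^{\perp}=X$ for every $X\leq F$ and $(Y^{\perp})^{\perp}=Y$ for every $Y\leq F^{*}$; invoking the closedness remark once more, condition (iii) of Proposition \ref{f.1} is satisfied, so $F$ has $\perp$ equivalence. Then the implication (ii)$\Rightarrow$(v) of the preceding theorem forces $R$ to be PF. The "equivalently" clause at the end of the statement is automatic, since an anti-isomorphism of lattices is by definition an isomorphism onto the opposite lattice.

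I do not anticipate a genuine obstacle here: the content has already been done in Proposition \ref{f.1} and the main theorem, and the only care needed is to keep straight whether one is working with all submodules of $F^{*}$ or only the closed ones, a distinction that collapses precisely under the finite generation hypothesis on $F$.
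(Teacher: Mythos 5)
Your proposal is correct and matches what the paper evidently intends: the corollary has no separate proof in the paper because it is an immediate rephrasing of the main theorem once one observes (as the paper notes after Proposition \ref{f.1}) that for a finitely generated module $F$ every submodule of $F^{*}$ is closed, collapsing "closed submodules of $F^{*}$" to "submodules of $F^{*}$" and reducing the corollary to the equivalence (ii) $\Leftrightarrow$ (v) (or even just (vii) $\Leftrightarrow$ (v)) of the theorem.
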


\end{section}


\begin{thebibliography}{99}
{\footnotesize


\bibitem{A} R. B. Ash, {\it Abstract Algebra }, University of
Illinois, 2000.

\bibitem{Ch} X. Chen and Q. Xu, Toeplitz operators on discrete
abelian groups, {\it Mathematical Proceeding of the Royal Irish
Academy}, {\bf 100A}(2), (2000), 139-148.

\bibitem{C} E. H. Connel, {\it Elements of Abstract and Linear
Algebra}, University of Miami, 2002.

\bibitem{I} I. D. Ion, {\it Algebra}, Bucharest, 1974.

\bibitem{Is} I. M. Isaac, {\it Algebra}, Belmont, California, 1994.

\bibitem{L} L. Lady, {\it Finite Rank Torsion Free Modules over
Dedekind Domains }, University Hawaii, 1998.

\bibitem{Le} J. D. Lewis, {\it Commutative ring theory}, Alberta
University, 2000.

 \bibitem{R} J. Rada and M. Saorin, On semiregular rings whose
 finitely generated modules embed in free modules, {\it Canad.
 Math. Bull. }, Vol. {\bf 40}(2), (1997).

 \bibitem{V} T. M. Viswanathan, Ordered modules of fractions,
 {\it J. Reine Sngew. Math.}, (1969).
}
\end{thebibliography}
\end{document}